\newtheorem{thm}{Theorem}[section]
\newtheorem{prop}[thm]{Proposition}
\newtheorem{cor}[thm]{Corollary}
\newtheorem{rem}[thm]{Remark}
\newtheorem{ex}[thm]{Example}
\newcommand{\m}{\mathfrak m}
\newcommand{\n}{\mathfrak n}
\newcommand{\p}{\mathfrak p}
\newcommand{\q}{\mathfrak q}
\def\opn#1#2{\def#1{\operatorname{#2}}} 
\opn\spec{Spec}
\opn\depth{depth}
\opn\height{ht}
\opn\HK{HK}
\newcommand\bino[2]{\mleft(\knds\genfrac..{0pt}{}{#1}{#2}\knds\mright)}
\newcommand{\knds}{\kern-\nulldelimiterspace}
\title{The Hilbert-Kunz function of some quadratic quotients of the Rees algebra}
\author{Francesco Strazzanti}
\address{F. Strazzanti - Dipartimento di Matematica "Giuseppe Peano" - Universit\`a degli Studi di Torino - Via Carlo Alberto 10 - 10123 Torino - Italy.}
\email{francesco.strazzanti@gmail.com}
\author{Santiago Zarzuela Armengou}
\address{S. Zarzuela Armengou - Departament de Matem\`atiques i Inform\`atica - Universitat de Barcelona - Gran Via de les Corts Catalanes 585 - 08007 Barcelona - Spain.}
\email{szarzuela@ub.edu}
\thanks{The first author was supported by INdAM, more precisely he was "titolare di una borsa per l'estero dell'Istituto Nazionale di Alta Matematica" and "titolare di un Assegno di Ricerca dell'Istituto Nazionale di Alta Matematica". The second author was supported by Spanish Ministerio de Ciencia e Innovaci\'on Project PID2019-104844GB-100}
\subjclass[2010]{13D40, 13H15, 13A30}
\begin{document}

\begin{abstract}
 Given a commutative local ring $(R,\mathfrak m)$ and an ideal $I$ of $R$, a family of quotients of the Rees algebra $R[It]$ has been recently studied as a unified approach to the Nagata's idealization and the amalgamated duplication and as a way to construct interesting examples, especially integral domains. When $R$ is noetherian of prime characteristic, we compute the Hilbert-Kunz function of the members of this family and, provided that either $I$ is $\mathfrak{m}$-primary or $R$ is regular and F-finite, we also find their Hilbert-Kunz multiplicity. Some consequences and examples are explored.
\end{abstract}


\maketitle

\section{Introduction}

The Hilbert-Kunz function and the Hilbert-Kunz multiplicity were introduced by Monsky in \cite{M}, even though they were already present in the work of Kunz \cite{K,K2}. For a noetherian commutative ring of prime characteristic, they provide a theory very similar to the Hilbert-Samuel theory in some respects but very different in others.
For this reason, in the last decades these notions have been explored by many researchers, becoming probably one of the main topic in the study of noetherian rings of prime characteristic, see e.g. Huneke's survey \cite{H}. To be more precise, let $(R,\m)$ be a $d$-dimensional local noetherian commutative ring with identity of prime characteristic $p$.
If $\m=(f_1, \dots, f_r)$ and $e$ is a non-negative integer, we set
$\m^{[p^e]}=(f^{p^e} \mid f \in \m)=(f_1^{p^e}, \dots, f_r^{p^e})$.
The Hilbert-Kunz function of $R$ is given by the map $\mathbb{N} \rightarrow \mathbb{N}$, $e \mapsto \ell_{R}(R/\m^{[p^e]})$, whereas the Hilbert-Kunz multiplicity of $R$ is defined as
\[
e_{\HK}(R):= \lim_{e \rightarrow \infty} \frac{\ell_{R}(R/\m^{[p^e]})}{p^{ed}}.
\]
Monsky \cite{M} proved that this limit always exists. This is a positive real number, but it can be also irrational, see \cite{Br}.
One of the most interesting results related to the Hilbert-Kunz multiplicity is that $R$ is regular if and only if $e_{\HK}(R)=1$, provided that $R$ is unmixed \cite{WY}. Hence, $e_{\HK}(R)$ can be seen as a measure of how far $R$ is from being regular.

Given an ideal $I$ of $R$, the Rees algebra of $R$ with respect to $I$ is defined as  $R[It]= \oplus_{i \geq 0} I^i t^i \subseteq R[t]$, where $t$ is an indeterminate. Consider the polynomial $t^2+at+b$ with $a, b \in R$ and let $(I^2(t^2+at+b))$ denote the contraction to $R[It]$ of the ideal generated by $t^2+at+b$ in $R[t]$. In \cite{BDS} the family of rings $R(I)_{a,b}:=R[It]/(I^2(t^2+at+b))$ is studied. It can be considered as a unified approach to some well-studied constructions: $R(I)_{0,0}$ is isomorphic to the Nagata's idealization $R \ltimes I$ \cite{AW,N}, $R(I)_{-1,0}$ is isomorphic to the amalgamated duplication $R \Join I$ of $R$ along $I$ \cite{D,DF}, $R(I)_{0,b}$ is isomorphic to the pseudocanonical double cover of $R$ \cite{E,E2} provided that $I$ is a canonical ideal of $R$. Recently the properties of this family have been investigated by many researchers, e.g. in \cite{BDS2,B,DJS,DS,F,S,T}. Also, in \cite{OST} the family is used to construct one-dimensional Gorenstein local rings having decreasing Hilbert function. The aim of this paper is to compute the Hilbert-Kunz function and the Hilbert-Kunz multiplicity of the rings $R(I)_{a,b}$. In particular, if $(R,\m)$ is a local noetherian ring, in Theorem \ref{main} we prove the following formula for the Hilbert-Kunz function of $R(I)_{a,b}$:
\begin{equation*}
\ell_{R(I)_{a,b}}\left(\frac{R(I)_{a,b}}{\n^{[q]}}\right)=
\ell_R\left(\frac{R}{\m^{[q]}}\right)+\ell_R\left(\frac{I}{\m^{[q]}I}\right) -\ell_R\left(\frac{\m^{[q]}I+ B_q I^{[q]}}{\m^{[q]}I}\right),
\end{equation*}
where $q=p^e$, $\n$ is the maximal ideal of the local ring $R(I)_{a,b}$ and $B_q$ is an element of $R$ depending only on $a$, $b$, and $q$. We also provide many cases in which $B_q$ is always invertible (Remark \ref{Particular values}) or always zero (Corollary \ref{Equality holds}) for every $q$. Moreover, if either $I$ is an $\m$-primary ideal and $\dim(R)>0$ or $R$ is regular and F-finite, we prove that $e_{\HK}(R(I)_{a,b})=2e_{\HK}(R)$, see Theorem \ref{main} and Corollary \ref{regular ring} respectively. We also include some examples showing that the existence of the second coefficient of Hilbert-Kunz function of $R$ (resp. $R(I)_{a,b}$) does not imply the existence of the same coefficient for $R(I)_{a,b}$ (resp. $R$). Finally we give a method to produce infinitely many integral domains whose second coefficients of the Hilbert-Kunz function is periodic.

\section{The Hilbert-Kunz function of $R(I)_{a,b}$}

Let $R$ be a commutative ring, let $I$ be an ideal of $R$ and let $a$, $b \in R$. In Introduction we have defined the rings $R(I)_{a,b}$.
Since $R(I)_{a,b} \cong R \oplus I$ as $R$-modules, we denote the elements of $R(I)_{a,b}$ by $r+it$, where $r \in R$ and $i \in I$, and in this case we refer to $r$ and $it$ as the first and the second term of $r+it$ respectively.
With this notation, the multiplication in $R(I)_{a,b}$ is given by $(r+it)(s+jt)=rs-bij+(rj+si-aij)t$. We often assume that $I \neq R$, but notice that $R(R)_{a,b} \cong R[t]/(t^2+at+b)$ as rings.

If $(R,\m,k)$ is a noetherian local ring with dimension $d$, also $R(I)_{a,b}$ is a $d$-dimensional noetherian local ring with maximal ideal $\n=\{m+it \mid m \in \m, i \in I\}$ and $R(I)_{a,b}/\n \cong k$, see \cite{BDS}. Moreover, there is a natural injective homomorphism of rings $R \rightarrow R(I)_{a,b}$ and, since the two rings have the same residue field, this implies that every $R(I)_{a,b}$-module $M$ is an $R$-module and $\ell_R(M)=\ell_{R(I)_{a,b}}(M)$, where $\ell_S(\cdot)$ denotes the length as module over a ring $S$.

In this paper we denote the set of non-negative integers by $\mathbb{N}$.

\begin{prop} \label{Formula for the powers}
Let $R$ be a commutative ring, let $I$ be an ideal of $R$ and let $a, b \in R$. Then, for every $r+it \in R(I)_{a,b}$ and every $n \in \mathbb{N}$, it holds
\begin{align*}
(r+it)^n= &\left( r^n + \sum_{j=2}^n \binom{n}{j} \sum_{\substack{u,v \in \mathbb{N} \\ u+2v=j-2}} \binom{u+v}{u} \, (-a)^u \, (-b)^{v+1} \, i^{j} \, r^{n-j} \right) + \\
&\left( \sum_{j=1}^n \binom{n}{j} \sum_{\substack{u,v \in \mathbb{N} \\ u+2v=j-1}} \binom{u+v}{u} \, (-a)^u \, (-b)^v \, i^{j} \, r^{n-j} \right)t.
\end{align*}
\end{prop}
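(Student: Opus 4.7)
The plan is to embed $R(I)_{a,b}$ into an auxiliary ring where the computation reduces to a routine binomial expansion. Set $\widetilde{R}:=R[T]/(T^2+aT+b)$ and consider the $R$-linear map $\varphi\colon R(I)_{a,b}\to\widetilde{R}$ sending $r+it$ to $r+iT$. Using $T^2\equiv -aT-b$ one checks that
\[
(r_1+i_1T)(r_2+i_2T)=(r_1r_2-bi_1i_2)+(r_1i_2+r_2i_1-ai_1i_2)T,
\]
which matches the multiplication rule recalled at the start of the section, so $\varphi$ is a ring homomorphism; it is injective because on underlying $R$-modules it is just the inclusion $R\oplus I\hookrightarrow R\oplus RT$.

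Inside $\widetilde{R}$ the element $T$ commutes with $r$ and $i$, so the ordinary binomial theorem gives $(r+iT)^n=\sum_{j=0}^{n}\binom{n}{j}r^{n-j}i^jT^j$. Writing $T^j=P_j+Q_jT$ in $\widetilde{R}$, the identity $T^2=-aT-b$ yields $P_{j+1}=-bQ_j$ and $Q_{j+1}=P_j-aQ_j$, and hence the three-term recursion $Q_{j+1}=-aQ_j-bQ_{j-1}$ with $Q_0=0$ and $Q_1=1$. I would then establish, by induction on $j$, the closed form $Q_j=\sum_{u+2v=j-1}\binom{u+v}{u}(-a)^u(-b)^v$ for $j\geq 1$, from which $P_j=-bQ_{j-1}$ gives the corresponding formula for $P_j$ when $j\geq 2$, together with $P_0=1$ and $P_1=0$.

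Substituting these expressions into the binomial expansion and applying $\varphi^{-1}$, the coefficient of $1$ in $(r+iT)^n$ is $\sum_{j=0}^{n}\binom{n}{j}r^{n-j}i^jP_j$, which (separating $j=0$ and using $P_1=0$) collapses to the first parenthesis in the statement; similarly the coefficient of $T$ is $\sum_{j=0}^{n}\binom{n}{j}r^{n-j}i^jQ_j$, and this element of $I$ becomes the second parenthesis multiplied by $t$. The only non-formal point is the induction for $Q_j$: after shifting summation indices by $u\mapsto u+1$ in $-aQ_j$ and by $v\mapsto v+1$ in $-bQ_{j-1}$, the coefficient of a monomial $(-a)^u(-b)^v$ with $u,v\geq 1$ comes out to $\binom{u+v-1}{u-1}+\binom{u+v-1}{u}$, which equals $\binom{u+v}{u}$ by Pascal's identity, while the boundary cases $u=0$ or $v=0$ match by inspection. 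A less elegant alternative would be to induct directly on $n$ inside $R(I)_{a,b}$, tracking sequences $A_n,B_n$ with $A_{n+1}=rA_n-biB_n$ and $B_{n+1}=iA_n+(r-ai)B_n$; but the embedding into $\widetilde{R}$ cleanly separates the combinatorial content from the bookkeeping and explains the shape of the formula as nothing more than the expansion of $T^j$ modulo a quadratic.
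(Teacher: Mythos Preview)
Your proof is correct and takes a genuinely different route from the paper's own argument. The paper proceeds by a direct induction on $n$ inside $R(I)_{a,b}$: assuming the formula for $(r+it)^n$, it multiplies by $r+it$ and then carries out a lengthy sequence of index shifts and summation splittings to recover the formula for $n+1$; the key combinatorial step there, collapsing
\[
\sum_{\substack{u+2v=j-2}}\tbinom{u+v}{u}(-a)^u(-b)^{v+1}+\sum_{\substack{u+2v=j-1}}\tbinom{u+v}{u}(-a)^{u+1}(-b)^v=\sum_{\substack{u+2v=j}}\tbinom{u+v}{u}(-a)^u(-b)^v,
\]
is exactly your Pascal-identity verification for $Q_{j+1}=-aQ_j-bQ_{j-1}$, but in the paper it is buried inside the larger induction on $n$. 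Your embedding into $\widetilde R=R[T]/(T^2+aT+b)$ (which is nothing other than $R(R)_{a,b}$) factors the problem cleanly: the binomial theorem handles the dependence on $n$ in one stroke, and the only nontrivial work is the closed form for $T^j=P_j+Q_jT$, an induction on $j$ that is shorter and more transparent than the paper's induction on $n$. The ``less elegant alternative'' you sketch at the end, with the coupled recursions $A_{n+1}=rA_n-biB_n$ and $B_{n+1}=iA_n+(r-ai)B_n$, is precisely the paper's method. Your approach has the further conceptual payoff of explaining why the inner sums depend only on $j$ (and on $a,b$): they are simply the coefficients of $1$ and $T$ in $T^j$, independent of $r$, $i$, and $n$.
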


\begin{proof}
We prove it by induction on $n$. The base case is trivial, so we assume that the formula is true for $(r+it)^n$ and consider the first term of $(r+it)^{n+1}=(r+it)^{n}(r+it)$, which is
\begin{gather*}
r^{n+1} + \sum_{j=2}^n \binom{n}{j} \sum_{\substack{u,v \in \mathbb{N} \\ u+2v=j-2}} \binom{u+v}{u} \, (-a)^u \, (-b)^{v+1} \, i^{j} \, r^{n+1-j} \ + \\
+ \sum_{j=1}^n \binom{n}{j} \sum_{\substack{u,v \in \mathbb{N} \\ u+2v=j-1}} \binom{u+v}{u} \, (-a)^u \, (-b)^{v+1} \, i^{j+1} \, r^{n-j} = \\
=r^{n+1} + \sum_{j=2}^n \binom{n}{j} \sum_{\substack{u,v \in \mathbb{N} \\ u+2v=j-2}} \binom{u+v}{u} \, (-a)^u \, (-b)^{v+1} \, i^{j} \, r^{n+1-j} \ + \\
+ \sum_{j=2}^{n+1} \binom{n}{j-1} \sum_{\substack{u,v \in \mathbb{N} \\ u+2v=j-2}} \binom{u+v}{u} \, (-a)^u \, (-b)^{v+1} \, i^{j} \, r^{n+1-j} = \\
=r^{n+1} + \sum_{j=2}^{n+1} \binom{n+1}{j} \sum_{\substack{u,v \in \mathbb{N} \\ u+2v=j-2}} \binom{u+v}{u} \, (-a)^u \, (-b)^{v+1} \, i^{j} \, r^{n+1-j}.
\end{gather*}
On the other hand, the coefficient $X$ of the second term is equal to
\begin{gather*}
X:= \sum_{j=1}^n \binom{n}{j} \sum_{\substack{u,v \in \mathbb{N} \\ u+2v=j-1}} \binom{u+v}{u} \, (-a)^u \, (-b)^v \, i^{j} \, r^{n+1-j} \ + \\
+ i  r^n  + \sum_{j=2}^n \binom{n}{j} \sum_{\substack{u,v \in \mathbb{N} \\ u+2v=j-2}} \binom{u+v}{u} \, (-a)^u \, (-b)^{v+1} \, i^{j+1} \, r^{n-j}  \ + \\
 + \sum_{j=1}^n \binom{n}{j} \sum_{\substack{u,v \in \mathbb{N} \\ u+2v=j-1}} \binom{u+v}{u} \, (-a)^{u+1} \, (-b)^v \, i^{j+1} \, r^{n-j} = \\
 = n i r^n - \binom{n}{2} a i^2 r^{n-1} + \sum_{j=3}^{n} \binom{n}{j} \sum_{\substack{u,v \in \mathbb{N} \\ u+2v=j-1}} \binom{u+v}{u} \, (-a)^u \, (-b)^v \, i^{j} \, r^{n+1-j}  + i r^n - n a i^2 r^{n-1}  +  \\
+ \sum_{j=2}^{n} \binom{n}{j} \left(\sum_{\substack{u,v \in \mathbb{N} \\ u+2v=j-2}} \binom{u+v}{u} \, (-a)^u \, (-b)^{v+1} +  \sum_{\substack{u,v \in \mathbb{N} \\ u+2v=j-1}} \binom{u+v}{u} \, (-a)^{u+1} \, (-b)^v \right) i^{j+1} \, r^{n-j}.
\end{gather*}
If $j \geq 2$, we note that
\begin{gather*}
\sum_{\substack{u,v \in \mathbb{N} \\ u+2v=j-2}} \binom{u+v}{u} \, (-a)^u \, (-b)^{v+1} +  \sum_{\substack{u,v \in \mathbb{N} \\ u+2v=j-1}} \binom{u+v}{u} \, (-a)^{u+1} \, (-b)^v = \\
=\sum_{\substack{u,v \in \mathbb{N}, v \neq 0 \\ u+2v=j}} \binom{u+v-1}{u} \, (-a)^u \, (-b)^{v} +  \sum_{\substack{u,v \in \mathbb{N}, u \neq 0  \\ u+2v=j}} \binom{u-1+v}{u-1} \, (-a)^{u} \, (-b)^v = \\
 = \sum_{\substack{u,v \in \mathbb{N} \\ u+2v=j}} \binom{u+v}{u} \, (-a)^u \, (-b)^{v}.
\end{gather*}
Therefore, it follows that
\begin{gather*}
 X = (n+1) i  r^n - \binom{n+1}{2} a  i^2  r^{n-1} +  \sum_{j=3}^{n} \binom{n}{j} \sum_{\substack{u,v \in \mathbb{N} \\ u+2v=j-1}} \binom{u+v}{u} \, (-a)^u \, (-b)^v \, i^{j} \, r^{n+1-j} \ +  \\
+ \sum_{j=3}^{n+1} \binom{n}{j-1} \sum_{\substack{u,v \in \mathbb{N} \\ u+2v=j-1}} \binom{u+v}{u} \, (-a)^u \, (-b)^{v} i^{j} \, r^{n+1-j}= \\
 = \sum_{j=1}^{n+1} \binom{n+1}{j} \sum_{\substack{u,v \in \mathbb{N} \\ u+2v=j-1}} \binom{u+v}{u} \, (-a)^u \, (-b)^v \, i^{j} \, r^{n+1-j}. \qedhere
\end{gather*}
\end{proof}

It will be useful to specialize the formula in the previous proposition for some particular members of the family $R(I)_{a,b}$.

\begin{cor} \label{Fibonacci}
Let $R$ be a commutative ring and let $I$ be an ideal of $R$. Let $r \in R$, $i \in I$ and $n \in \mathbb{N}$.
\begin{enumerate}
\item In $R(I)_{-1,0} \cong R \Join I$ we have $(r+it)^n=r^n + \sum_{j=1}^n \bino{n}{j} \, i^j \, r^{n-j} \,t$.
\item In $R(I)_{0,b}$ we have
\[
(r+it)^n= \sum_{j=0}^{\left\lfloor \frac{n}{2}\right\rfloor} \binom{n}{2j} \, (-b)^j \, i^{2j} \, r^{n-2j} + \left(\sum_{j=0}^{\left\lfloor \frac{n-1}{2}\right\rfloor} \binom{n}{2j+1} \, (-b)^j \, i^{2j+1} \, r^{n-2j-1}\right)t.
\]
\item In $R(I)_{-1,-1}$ we have $(r+it)^n=r^n + \sum_{j=2}^n \bino{n}{j} \, F_{j-1} \, i^j \, r^{n-j} + \sum_{j=1}^n \bino{n}{j} \, F_j \, i^j \, r^{n-j} \, t$, where $F_k$ denotes the $k$-th Fibonacci number.
\end{enumerate}
\end{cor}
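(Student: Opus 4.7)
The plan is to specialize the master formula of Proposition \ref{Formula for the powers} to each of the three choices of $(a,b)$ and simplify the resulting inner sums; no fresh induction is needed.

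For part (1) take $a=-1$, $b=0$. Every term in the first component of the formula carries a factor $(-b)^{v+1}$ and hence vanishes, leaving only $r^n$. In the second component the factor $(-b)^v = 0^v$ forces $v=0$, so $u = j-1$ and $\binom{u+v}{u} = \binom{j-1}{j-1} = 1$. The coefficient of $t$ therefore collapses to $\sum_{j=1}^{n} \binom{n}{j}\, i^j r^{n-j}$, as asserted.

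For part (2) take $a=0$ and leave $b$ free. Now $(-a)^u = 0^u$ forces $u=0$ in both inner sums. In the first component the condition $u + 2v = j - 2$ with $u=0$ forces $j$ even, and substituting $j = 2k$ gives $v = k-1$ and inner term $(-b)^k i^{2k} r^{n-2k}$; combined with the $k=0$ (i.e.\ $j=0$) contribution $r^n$ this yields the stated sum over $k = 0, \dots, \lfloor n/2 \rfloor$. The second component is treated in exactly the same way with $j = 2k+1$, so the summation index runs over $k = 0, \dots, \lfloor (n-1)/2 \rfloor$.

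For part (3) take $a = b = -1$, so all signs disappear and the inner sums reduce to $\sum_{u+2v=j-2}\binom{u+v}{u}$ in the first component and $\sum_{u+2v=j-1}\binom{u+v}{u}$ in the second. Parametrizing by $v = k$ gives $u = j-2-2k$ (respectively $u = j-1-2k$) and rewrites the summands as $\binom{j-2-k}{k}$ and $\binom{j-1-k}{k}$, summed over $k \ge 0$ with the natural range. The classical identity $F_m = \sum_{k \ge 0} \binom{m-1-k}{k}$ (a standard corollary of the Pascal recursion satisfied by this sum) then identifies the two sums with $F_{j-1}$ and $F_j$, respectively. The only real obstacle is this last Fibonacci identification; the other two parts are pure bookkeeping once one notices that $0^u$ or $0^v$ annihilates almost every summand.
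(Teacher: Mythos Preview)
Your proof is correct and follows essentially the same route as the paper: specialize Proposition~\ref{Formula for the powers} to each pair $(a,b)$, observe that the factors $0^u$ or $0^v$ kill all but one term of the inner sum in parts (1) and (2), and for part (3) invoke the classical diagonal-sum identity $F_m=\sum_{k\ge 0}\binom{m-1-k}{k}$ to recognize the inner sums as Fibonacci numbers. The paper's own proof is terser but makes exactly the same moves.
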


\begin{proof}
Using the previous lemma, the first two formulas follow by an easy calculation. As for the last one, we note that $F_j=\sum_{v=0}^{\lfloor \frac{j-1}{2} \rfloor} \bino{j-1-v}{v} = \sum_{\substack{u,v \in \mathbb{N} \\ u+2v=j-1}}\bino{u+2v-v}{v}=\sum_{\substack{u,v \in \mathbb{N} \\ u+2v=j-1}}\bino{u+v}{u}$.
\end{proof}

Now we are ready to prove a formula for the Hilbert-Kunz function of $R(I)_{a,b}$. From now on $R$ will be a commutative noetherian ring with identity.

\begin{thm} \label{main}
Let $(R, \m)$ be a local ring of prime characteristic $p$, let $a, b \in R$ and let $I \neq R$ be a non-zero ideal of $R$.
\begin{enumerate}
\item For every $q=p^e$ with $e \in \mathbb{N}$, it holds $(r+it)^{q}=r^{q}+ A_q i^{q} +(B_q i^q)t$,
where
\[ A_q= \sum_{\substack{u,v \in \mathbb{N} \\ u+2v=q-2}} \binom{u+v}{u} \, (-a)^u \, (-b)^{v+1}, \hspace{30pt} B_q=\sum_{\substack{u,v \in \mathbb{N} \\ u+2v=q-1}} \binom{u+v}{u} \, (-a)^u \, (-b)^v
\]
are elements of $R$ and depend only on $a$, $b$, and $q$. In particular, if $R$ is $F$-finite, then also $R(I)_{a,b}$ is $F$-finite for every $a$ and $b$.
\item For every $q=p^e$ with $e \in \mathbb{N}$, it holds
\begin{equation*}
\ell_{R(I)_{a,b}}\left(\frac{R(I)_{a,b}}{\n^{[q]}}\right)=
\ell_R\left(\frac{R}{\m^{[q]}}\right)+\ell_R\left(\frac{I}{\m^{[q]}I}\right) -\ell_R\left(\frac{\m^{[q]}I+ B_q I^{[q]}}{\m^{[q]}I}\right).
\end{equation*}
\item If $I$ is $\m$-primary and $\dim(R) > 0$, then $e_{\HK}(R(I)_{a,b})=2e_{\HK}(R)$.
\end{enumerate}
\end{thm}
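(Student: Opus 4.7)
The plan is to prove the three parts in sequence, each building on the previous one. Part (1) follows by specializing Proposition \ref{Formula for the powers} to $n = q = p^e$: in characteristic $p$, the binomial $\binom{q}{j}$ vanishes for $1 \leq j \leq q-1$, so the only surviving contributions are $j = 0$, producing $r^q$, and $j = q$, where the inner sums are precisely $A_q i^q$ (for the first term) and $B_q i^q$ (for the coefficient of $t$). F-finiteness of $R(I)_{a,b}$ is then immediate, since $R(I)_{a,b} \cong R \oplus I$ is module-finite over $R$, and module-finite extensions of an F-finite ring are F-finite.

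For Part (2), the key move is to define the $R$-linear surjection
\[
\Phi \colon R(I)_{a,b} \longrightarrow \frac{R}{\m^{[q]}} \oplus \frac{I}{\m^{[q]}I + B_q I^{[q]}}, \qquad r + it \longmapsto (\bar r, \bar i),
\]
and to show that $\ker \Phi = \n^{[q]}$. The inclusion $\n^{[q]} \subseteq \ker \Phi$ is free from Part (1): every generator $x^q$ of $\n^{[q]}$, with $x = m + it \in \n$, has first component $m^q + A_q i^q \in \m^{[q]} + A_q I^{[q]} \subseteq \m^{[q]}$ (using $I^{[q]} \subseteq \m^{[q]}$) and second component $B_q i^q \in B_q I^{[q]}$. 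For the reverse inclusion, I observe that $\ker \Phi = \m^{[q]} \oplus (\m^{[q]}I + B_q I^{[q]})t$ as an $R$-submodule, and verify each piece lies in $\n^{[q]}$: the $\m^{[q]}$ summand is contained in $\n^{[q]}$ by construction; $\m^{[q]} I \cdot t$ arises from products $f_k^q \cdot (g_l t) \in \n^{[q]}$; and $B_q I^{[q]} \cdot t$ is captured by $B_q g_l^q t = (g_l t)^q - A_q g_l^q$, where $(g_l t)^q$ is a generator of $\n^{[q]}$ and $A_q g_l^q \in I^{[q]} \subseteq \m^{[q]} \subseteq \n^{[q]}$. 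Additivity of length on the direct sum then yields the stated formula.

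For Part (3), I divide the formula from Part (2) by $q^d$ and analyse each summand. The first contributes $e_{\HK}(R)$ by definition. For the middle summand, I choose $N$ with $\m^N \subseteq I$, which exists as $I$ is $\m$-primary; the chain $\m^{[q]}\m^N \subseteq \m^{[q]} I \subseteq \m^{[q]}$ gives the sandwich $\ell_R(R/\m^{[q]}) \leq \ell_R(R/\m^{[q]}I) \leq \ell_R(R/\m^{[q]}\m^N)$, and the difference $\ell_R(\m^{[q]}/\m^N\m^{[q]})$ is at most $\mu_R(\m^{[q]}) \cdot \ell_R(R/\m^N)$, a constant. Hence $\ell_R(R/\m^{[q]} I)/q^d \to e_{\HK}(R)$, and from the short exact sequence $0 \to I/\m^{[q]}I \to R/\m^{[q]}I \to R/I \to 0$ (with $\ell_R(R/I)$ constant) we deduce $\ell_R(I/\m^{[q]}I)/q^d \to e_{\HK}(R)$. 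For the correction term, the crucial observation is $I \cdot I^{[q]} \subseteq \m^{[q]} I$ (each generator $g_j g_l^q$ lies in $g_j \cdot \m^{[q]} \subseteq \m^{[q]} I$ because $g_l^q \in \m^{[q]}$). This makes $(\m^{[q]} I + B_q I^{[q]})/\m^{[q]} I$ a quotient of $I^{[q]}/(I \cdot I^{[q]})$, itself a quotient of $I^{[q]}/\m^N I^{[q]}$, whose length is bounded by the constant $\mu_R(I^{[q]}) \cdot \ell_R(R/\m^N)$. Division by $q^d$ (with $d > 0$) sends this to zero, and assembling yields $e_{\HK}(R(I)_{a,b}) = 2\,e_{\HK}(R)$.

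The main obstacle I anticipate is the middle-term estimate in Part (3): proving $\ell_R(I/\m^{[q]}I)/q^d \to e_{\HK}(R)$ directly, without invoking module-additivity of Hilbert-Kunz multiplicities. The sandwich argument via $\m^{[q]}\m^N$ makes this essentially elementary. The kernel computation in Part (2), though conceptually the most intricate step, is forced once Part (1) is established.
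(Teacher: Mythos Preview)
Your proof is correct. Parts (1) and (2) match the paper's approach: the paper identifies $\n^{[q]}$ with $\m^{[q]} \oplus (\m^{[q]}I + B_q I^{[q]})$ by expanding the general element $(m^q + A_q i^q + B_q i^q t)(s+jt)$ directly rather than via your map $\Phi$, but the content is the same. One minor point: deducing $\n^{[q]} \subseteq \ker\Phi$ from the fact that each $q$-th power $x^q$ lies in $\ker\Phi$ tacitly uses that $\ker\Phi = \m^{[q]} \oplus (\m^{[q]}I + B_q I^{[q]})t$ is closed under multiplication by $R(I)_{a,b}$, not just by $R$; this is routine but deserves a line.

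Part (3) is where you genuinely diverge from the paper. To obtain $e_{\HK}(\m,I) = e_{\HK}(R)$, the paper invokes module-additivity of Hilbert--Kunz multiplicity \cite[Corollary~3.12]{H} on $0 \to I \to R \to R/I \to 0$, using $\dim(R/I) < \dim R$. Your sandwich $\m^{[q]}\m^N \subseteq \m^{[q]}I \subseteq \m^{[q]}$ achieves the same conclusion by elementary means; just note that $\mu_R(\m^{[q]})$ and $\mu_R(I^{[q]})$ are bounded by $\mu_R(\m)$ and $\mu_R(I)$ rather than literally constant, which is all you need. For the correction term, your bound via $I^{[q]}/I\,I^{[q]}$ coincides with the alternative elementary argument the paper records at the very end of its proof, bounding by $\mu(I)\,\ell_R(R/I)$. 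The trade-off: the paper's route is shorter and places the computation inside the standard additivity framework, while yours is fully self-contained and avoids citing \cite{H}.
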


\begin{proof}
(1) The formula follows from Proposition \ref{Formula for the powers}, since $p$ divides $\bino{q}{j}$ for every $j \neq 0,q$. Assume now that $R$ is $F$-finite. Since $R(I)_{a,b}$ is a finite extension of $R$, we obtain that $R(I)_{a,b}$ is a finite extension of $R^p$. As $R^p \subset (R(I)_{a,b})^p$, we get that $R(I)_{a,b}$ is $F$-finite too.   \\
(2) Every element in $\n^{[q]}$ is a finite sum of elements in the form
\[
\left(m^q + A_q i^q + B_qi^qt\right)(s+jt)
=m^q s + A_q i^q s- bB_q i^q j + (m^q j + A_q i^q j + B_q i^q s - aB_q i^q j)t
\]
for some $m \in \m$, $i,j \in I$ and $s \in R$. It follows that $\n^{[q]} \subseteq \{m + (m'i+B_q i')t  \mid m,m' \in \m^{[q]}, i \in I, i' \in I^{[q]}\}$ and it is easy to see that this is an equality. Therefore, there is an isomorphism $\n^{[q]} \cong \m^{[q]} \oplus (\m^{[q]}I + B_qI^{[q]})$ as $R$-modules and, since the length as $R(I)_{a,b}$-module and as $R$-module is the same, we get
\[\ell_{R(I)_{a,b}}\left(\frac{R(I)_{a,b}}{\n^{[q]}}\right)=
\ell_{R}\left(\frac{R \oplus I}{\m^{[q]} \oplus (\m^{[q]}I + B_qI^{[q]})}\right)=
\ell_R\left(\frac{R}{\m^{[q]}}\right)+\ell_R\left(\frac{I}{\m^{[q]}I+ B_q I^{[q]}}\right).
\]
Hence, the formula in (2) follows from the exact sequence
\[
0 \rightarrow \frac{\m^{[q]}I+ B_q I^{[q]}}{\m^{[q]}I} \rightarrow \frac{I}{\m^{[q]}I} \rightarrow \frac{I}{\m^{[q]}I+ B_q I^{[q]}} \rightarrow 0.
\]
(3) Computing the limit of the equality in (2) divided by $q^d$ for $e \rightarrow \infty$, we get
\[
e_{\HK}(R(I)_{a,b})=e_{\HK}(R)+e_{\HK}(\m,I)- \lim_{e \rightarrow\infty } \frac{\ell_R\left(\frac{\m^{[q]}I+ B_q I^{[q]}}{\m^{[q]}I}\right)}{q^d},
\]
where $e_{\HK}(\m,I)$ denotes the Hilbert-Kunz multiplicity of $I$ as an $R$-module.
Applying \cite[Corollary 3.12]{H} to the exact sequence $0 \rightarrow I \rightarrow R \rightarrow R/I \rightarrow 0$, we get $e_{\HK}(R)=e_{\HK}(\m,I)+e_{\HK}(\m,R/I)=e_{\HK}(\m,I)$, where $e_{\HK}(\m,R/I)$ is zero because $\dim(R/I) < \dim(R)$.
Moreover,
\[
0 \leq \ell_R\left(\frac{\m^{[q]}I+ B_q I^{[q]}}{\m^{[q]}I} \right) \leq \ell_R\left(\frac{\m^{[q]}I+ I^{[q]}}{\m^{[q]}I} \right)= \ell_R\left(\frac{I^{[q]}}{\m^{[q]}I \cap I^{[q]}} \right)
\]
by the second Isomorphism Theorem.
Since $I^{[q]}I \subseteq (\m^{[q]}I \cap I^{[q]})$, it is enough to prove that
\begin{equation} \label{limit}
\tag{$\star$}
 \lim_{e \rightarrow \infty } \frac{\ell_R \left(I^{[q]}/I^{[q]}I\right)}{q^{d}}=0.
\end{equation}
Since $I$ is an $\m$-primary ideal, applying \cite[Proposition 3.11]{H} to the short exact sequence $0 \rightarrow I \rightarrow R \rightarrow R/I \rightarrow 0$ and using the short exact sequence $0 \rightarrow I/I^{[q]}I \rightarrow R/I^{[q]}I \rightarrow R/I \rightarrow 0$, we get
\begin{align*}
\ell_R \left(\frac{R}{I^{[q]}} \right)&=\ell_R \left( \frac{I}{I^{[q]}I}\right)+ \ell_R \left(\frac{R/I}{I^{[q]}R/I}\right) + O(q^{d-1})= \\
&=  \ell_R\left(\frac{R}{I^{[q]}I}\right)-\ell_R\left(\frac{R}{I}\right)+ \ell_R\left(\frac{R}{I}\right)+O(q^{d-1})=\\
&=\ell_R\left(\frac{R}{I^{[q]}I}\right)+O(q^{d-1}),
\end{align*}
because $I^{[q]}R/I=0$. This equality and the short exact sequence
$0 \rightarrow I^{[q]}/I^{[q]}I \rightarrow R/I^{[q]}I \rightarrow R/I^{[q]} \rightarrow 0$
give
\[
\ell_R\left(\frac{I^{[q]}}{I^{[q]}I}\right)=\ell_R\left(\frac{R}{I^{[q]} I}\right) - \ell_R\left(\frac{R}{I^{[q]}}\right)=O(q^{d-1}),
\]
which implies Equality (\ref{limit}). An alternative way to prove Equality (\ref{limit}) is to use that $\ell_R(I^{[q]}/I^{[q]}I) \leq \mu(I^{[q]}/I^{[q]}I)\ell_R(R/I) \leq \mu(I) \ell_R(R/I)$ for all $q$, where $\mu(\cdot)$ denotes the number of minimal generators of an $R$-module.
\end{proof}

\begin{rem} \rm \label{Particular values}
It is possible to find $B_q$ for some interesting values of $a,b \in R$. For instance, we notice the following three cases:
\begin{itemize}
\item If $a=b=0$, then $R(I)_{0,0} \cong R \ltimes I$ and $B_q=0$ for every $q>1$;
\item If $a$ is invertible in $R$ and $b=0$, then $B_q$ is invertible in $R$ for every $q>1$;
\item If $a=b=-1$, then $B_q$ is equal to the $q$-th Fibonacci number $F_q$. In particular $B_q=0$ for every $q$ if $p=5$, while $B_q$ is invertible for every $q$ if $p \neq 5$.
\end{itemize}
The first two cases follows by definition of $B_q$. We also notice that the second case includes $R(I)_{-1,0} \cong R \Join I$.
As for the last one, we have already seen that $B_q=F_q$ in the proof of Corollary \ref{Fibonacci}. Therefore, when $p=5$, we get $\gcd(F_q,F_5)=F_{\gcd(q,5)}=F_5=5$ and, thus, $B_q=0$. On the other hand, if $p\neq 5$, it is well known that $p$ divides either $F_{p-1}$ or $F_{p+1}$ and in both cases $\gcd(F_q,F_{p\pm 1})=F_{\gcd(q,p \pm 1)}=F_1=1$. Hence, $F_q$ is not divided by $p$.
\end{rem}

If $(R,\m)$ is a $d$-dimensional local ring of prime characteristic $p$, we set $f_e(R)=(1/p^{ed}) \ell_R(R/\m^{[p^e]})$. We note that $\lim_{e \rightarrow \infty} f_e(R)=e_{\HK}(R)$.
Moreover, if $\mathfrak \p$ is a prime ideal of $R$, we set $f_e(\mathfrak{\p})=f_e(R_{\mathfrak{\p}})$. In \cite[Proposition 3.3]{K2} it is proved that $f_e(\mathfrak{p})\leq f_e(\mathfrak{q})$, provided that $\mathfrak{\p} \subseteq \mathfrak{\q}$ are prime ideals and $R$ is excellent and locally equidimensional.

\begin{cor} \label{regular ring}
If $R$ is a regular F-finite local ring of positive characteristic, $I \neq R$ is a non-zero ideal of $R$ and $a,b \in R$, then $e_{\HK}(R(I)_{a,b})=2$.
\end{cor}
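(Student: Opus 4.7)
The strategy is to apply Theorem~\ref{main}(2), divide by $q^d$ where $d=\dim R$, and let $q=p^e\to\infty$. Since $R$ is regular it is a domain, so the hypothesis that $I$ is a non-zero proper ideal forces $d\ge 1$ and $\dim(R/I)\le d-1$. By Kunz's theorem $\ell_R(R/\m^{[q]})=q^d$, so the first summand contributes $1$ in the limit. For the second summand, additivity of the Hilbert-Kunz multiplicity \cite[Corollary 3.12]{H} applied to $0\to I\to R\to R/I\to 0$, exactly as in the proof of Theorem~\ref{main}(3), yields $e_{\HK}(\m,I)=e_{\HK}(R)-e_{\HK}(\m,R/I)=1-0$, the second equality because $\dim(R/I)<d$. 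Hence $\ell_R(I/\m^{[q]}I)/q^d\to 1$.

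The main step is to show that $\ell_R((\m^{[q]}I+B_qI^{[q]})/\m^{[q]}I)=o(q^d)$. The key observation is that, for every $f\in I$, the image of $B_qf^q$ in $I/\m^{[q]}I$ is annihilated by the whole ideal $\m^{[q]}+I$. This reduces to the two containments
\[
I\cdot I^{[q]}\subseteq \m^{[q]}I\qquad\text{and}\qquad \m^{[q]}\cdot I^{[q]}\subseteq \m^{[q]}I,
\]
the first because $g\in I\subseteq\m$ forces $g^q\in\m^{[q]}$, the second because $g\in I$ forces $g^q\in I$; multiplying either by $B_q\in R$ gives the claim. Since $B_qI^{[q]}$ is generated as an ideal by $\mu(I)$ elements of the form $B_qf^q$, the quotient $(\m^{[q]}I+B_qI^{[q]})/\m^{[q]}I$ is a module over $R/(\m^{[q]}+I)\cong (R/I)/(\m/I)^{[q]}$ generated by $\mu(I)$ elements, so its $R$-length is at most $\mu(I)\cdot\ell_{R/I}((R/I)/(\m/I)^{[q]})=O(q^{\dim(R/I)})=O(q^{d-1})$, which is $o(q^d)$.

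Combining the three contributions yields $e_{\HK}(R(I)_{a,b})=1+1-0=2$. I expect the annihilator argument for the third summand to be the step requiring actual thought; the rest is routine Hilbert-Kunz bookkeeping once Theorem~\ref{main}(2) is in hand.
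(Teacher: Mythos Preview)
Your argument is correct and takes a genuinely different route from the paper's. The paper localizes at a minimal prime $\p$ of $I$: since $I_\p$ is $\p R_\p$-primary, Theorem~\ref{main}(3) gives $e_{\HK}\bigl((R(I)_{a,b})_\q\bigr)=2$ for a prime $\q$ lying over $\p$, and then Kunz's inequality $f_e(\q)\le f_e(\n)$ (valid for excellent rings, hence the need for F-finiteness) yields the lower bound $2\le e_{\HK}(R(I)_{a,b})$; the upper bound comes from Theorem~\ref{main}(2). Your approach instead bounds the correction term directly: the observation that $(\m^{[q]}+I)\cdot B_qI^{[q]}\subseteq \m^{[q]}I$ makes the quotient a module over $(R/I)/(\m/I)^{[q]}$ on $\mu(I)$ generators, so its length is $O(q^{\dim(R/I)})=O(q^{d-1})$. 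This is more elementary, avoids the localization and the Kunz inequality entirely, and in fact never uses the F-finite hypothesis---so you have proved a slightly stronger statement. The paper's argument, on the other hand, explains conceptually why the multiplicity is already $2$ at every prime of $R(I)_{a,b}$ lying over a minimal prime of $I$, and fits the result into the general framework of Theorem~\ref{main}(3).
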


\begin{proof}
Let $\p$ be a minimal prime of $I$ and let $\q$ be a prime ideal of $R(I)_{a,b}$ that lies over $\p$. Note that the ideal $q$ always exists because $R(I)_{a,b}$ is an integral extension of $R$ (in \cite[Proposition 1.2]{DS} it is given an explicit and complete description of the possible $\q$). Since $I \subseteq \p$, by \cite[Proposition 1.4]{DS} it follows that $(R(I)_{a,b})_{\q} \cong R_{\p}(I_{\p})_{a,b}$. Moreover, $(R_{\p},\p R_{\p})$ is a regular local ring and $I_{\p}$ is a $\p R_{\p}$-primary ideal. Thus, Theorem \ref{main} implies that $e_{\HK}(R_\p(I_\p)_{a,b})=2e_{\HK}(R_\p)=2$. Consequently, since $R(I)_{a,b}$ is F-finite by Theorem \ref{main}, and then excellent, we get
\[
2=e_{\HK}((R(I)_{a,b})_{\q})=\lim_{e \rightarrow \infty} f_e(\q) \leq \lim_{e \rightarrow \infty} f_e(\n)=e_{\HK}(R(I)_{a,b}) \leq 2,
\]
where the last inequality follows by Theorem \ref{main} (2) because $I$ has positive height and, therefore, $e_{\HK}(\m,I)=e_{\HK}(R)$. Hence, $e_{\HK}(R(I)_{a,b}) = 2$.
\end{proof}

\begin{rem} \rm
Let $A$ be a 2-dimensional Cohen-Macaulay local ring.
In \cite[Theorem 5.4]{WY} it is proved that $e_{\HK}(A)=2$ if and only if $A$ is either a non-F-rational double point or $A$ is the ``ordinary triple point'', i.e. $\widehat{A} \cong k[[x^3,x^2y,xy^2,y^3]]$. If $A$ is not Cohen-Macaulay, it is possible to use the previous corollary to show that this classification is far from being true. More precisely, it is possible to construct a 2-dimensional local ring $A$ with $e_{\HK}(A)=2$ and Hilbert-Samuel multiplicity $n$ for every integer $n>2$. Consider a 2-dimensional regular F-finite local ring $(R,\m)$  and let $I$ be an $\m$-primary ideal with $n-1$ generators (for instance, $I$ could be generated by $x^n$, $y^n$ and other $n-3$ monomials with degree $n$, where $\m=(x,y)$).
By the previous corollary we have $e_{\HK}(R(I)_{a,b})=2e_{\HK}(R)=2$ for every $a,b$. Moreover, by \cite[Proposition 2.3]{BDS} and \cite[Corollary 5.9]{DFF}, the Hilbert-Samuel multiplicity of $R(I)_{a,b}$ is $e(R(I)_{a,b})=e(R)+\ell_{R}(I/\m I)=1+n-1=n$, because $\m$ is a minimal reduction of itself.
\end{rem}

Since the Nagata's idealization is isomorphic to $R(I)_{0,0}$, it is easy to see that the last term of the formula in Theorem \ref{main}(2) is zero, as it is possible to prove directly. The next corollary collects other cases in which this occurs.

\begin{cor} \label{Equality holds}
Let $(R,\m,k)$ be a local ring of prime characteristic $p$, let $a, b \in R$ and let $I \neq R$ be a non-zero ideal of $R$. Consider the following properties:
\begin{enumerate}
\item $ I \subseteq \m^{[p]}$;
\item $R=k[[x_1, \dots, x_n]]/J$, both $I$ and $J$ are monomial ideals and $I$ is contained in $(m^2 \mid m\in \m \text{ is a monomial}\,)$;
\item $a=b=-1$ and $p=5$;
\item $a=0$ and $p=2$.
\end{enumerate}
If one of the previous properties holds, then
\begin{equation}\label{Equality on function}
\tag{a}
\ell_{R(I)_{a,b}}\left(\frac{R(I)_{a,b}}{\n^{[q]}}\right)=\ell_R\left(\frac{R}{\m^{[q]}}\right)+ \ell_R\left(\frac{I}{\m^{[q]}I}\right)
\end{equation}
for every positive integer $q$. In particular, $e_{\HK}(R(I)_{a,b})=e_{\HK}(R)+e_{\HK}(\m,I)$.
\end{cor}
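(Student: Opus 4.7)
The plan is to reduce equation (a) to a single ideal inclusion and then verify it case by case. By Theorem \ref{main}(2), for $q=p^e$ with $e\geq 1$, equation (a) is equivalent to the vanishing of the third summand $\ell_R((\m^{[q]}I+B_qI^{[q]})/\m^{[q]}I)$, which in turn is equivalent to the inclusion $B_q I^{[q]} \subseteq \m^{[q]} I$. Once this inclusion is established for every such $q$, dividing equation (a) by $q^d$ and letting $e\to\infty$ yields the Hilbert-Kunz multiplicity equality $e_{\HK}(R(I)_{a,b})=e_{\HK}(R)+e_{\HK}(\m,I)$.

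Cases (3) and (4) reduce to showing that $B_q=0$ in $R$. For (3), Remark \ref{Particular values} identifies $B_q$ with the Fibonacci number $F_q$; since $F_5=5$ divides $F_{5^e}$, the element $B_q$ is zero in $R$ for every $e \geq 1$ (characteristic $5$). For (4), reading off the formula for $B_q$ in Theorem \ref{main}(1) with $a=0$, every summand with $u\geq 1$ is killed by the factor $0^u$, while the $u=0$ summand requires $2v=q-1$, which has no integer solution when $q=2^e$ is even; hence $B_q=0$.

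For (1) and (2) I would instead establish the stronger inclusion $I^{[q]} \subseteq \m^{[q]} I$, which makes $B_q I^{[q]} \subseteq \m^{[q]} I$ automatic regardless of $B_q$. In case (1), pick generators $f_1, \ldots, f_r$ of $\m$ and write an arbitrary $i\in I\subseteq\m^{[p]}$ as $i = \sum_k r_k f_k^p$. Applying the Frobenius endomorphism $e-1$ times collapses the multinomial expansion and yields the identity $i^{p^{e-1}} = \sum_k r_k^{p^{e-1}} f_k^{p^e} \in \m^{[q]}$. Then the factorization $i^q = i^{p^{e-1}} \cdot i^{p^{e-1}(p-1)}$ writes $i^q$ as a product of an element of $\m^{[q]}$ and an element of $I$ (since the exponent $p^{e-1}(p-1)$ is at least $1$), so $i^q \in \m^{[q]} I$; because $I^{[q]}$ is generated by such $q$-th powers, the required inclusion follows. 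In case (2), the monomial hypothesis forces every monomial generator $\mu$ of $I$ to be divisible by $x_j^2$ for some $j$, whence $\mu^{q-1}$ is divisible by $x_j^{2(q-1)}\in (x_j^q)\subseteq\m^{[q]}$ as soon as $q\geq 2$; writing $\mu^q = \mu \cdot \mu^{q-1}$ puts $\mu^q$ in $I\cdot\m^{[q]} = \m^{[q]} I$, and again $I^{[q]} \subseteq \m^{[q]} I$ since $I^{[q]}$ is generated by the $\mu^q$. The only substantive step is the iterated Frobenius identity in (1), which cleanly splits $i^q$ into a piece captured by $\m^{[q]}$ and a leftover piece still living in $I$; the remaining manipulations are formal.
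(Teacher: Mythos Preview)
Your proof is correct and follows essentially the same route as the paper: both reduce equation (a) via Theorem \ref{main}(2) to the inclusion $B_q I^{[q]}\subseteq \m^{[q]}I$, dispose of cases (3)--(4) by checking $B_q=0$, and handle cases (1)--(2) by proving the stronger inclusion $I^{[q]}\subseteq \m^{[q]}I$ through the same Frobenius-splitting factorization $i^q=i^{p^{e-1}}\cdot i^{p^{e-1}(p-1)}$ (case (1)) and a square-divisibility argument on monomial generators (case (2)). Your framing is in fact slightly cleaner than the paper's, which states up front that it suffices to prove $I^{[q]}\subseteq \m^{[q]}I$ in all four cases and then tacitly relaxes this in (3)--(4) to $B_q=0$; you make that distinction explicit from the outset.
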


\begin{proof}
By Theorem \ref{main} it is enough to show that $I^{[q]} \subseteq \m^{[q]}I$ for every integer $q=p^e$ with $e \in \mathbb{N}$ and $e \geq 1$. Moreover, we can reduce ourselves to consider only the generators $f^q$ of $I^{[q]}$, where $f$ is a generator of $I$. \\
(1) Since $ I \subseteq \m^{[p]}$, we have $f=\sum_{i=1}^n r_i m_i^p$ for some $r_i \in R$, $m_i \in \m$ and $n \in \mathbb{N}$. Therefore, we get
\[f^q=\left(\sum_{i=1}^n r_i m_i^p\right)^{(p-1)p^{e-1}}\left(\sum_{i=1}^n r_i m_i^p\right)^{p^{e-1}}=f^{(p-1)p^{e-1}} \sum_{i=1}^n r_i^{p^{e-1}} m_i^q \in I\m^{[q]}.
\]
(2) We can write $f=\sum_{i=1}^n \, r_i m_i^2$ for some $r_i \in R$ and $m_i \in \m$. Moreover, we can assume that $n=1$ because both $I$ and $J$ are monomial ideals. Thus, we have $f=rm^2$ with $r \in R$ and $m \in \m$. It follows that $f^q=(m^q)(rm^2)(r^{q-1}m^{q-2}) \in \m^qI$.\\
(3)-(4) In these cases $B_q=0$ by Remark \ref{Particular values} and by definition of $B_q$ respectively.
\end{proof}

\begin{rem} \rm
In general Equality (\ref{Equality on function}) does not hold as the following example shows. We also notice that here $I$ is contained in $(m^{p-1} \mid m \in \m)$ but not in $\m^{[p]}$. Moreover, since $p=3$, the example also shows that it is not possible to drop the monomial hypothesis on $I$ in the second condition of the previous corollary.

Let $k=\mathbb{Z}/3\mathbb{Z}$, $R=k[[x,y]]$, $I=(x^2-y^2)$, $a=-1$ and $b=0$, then $R(I)_{a,b} \cong R \Join I$ and $B_q=1$ for every $q$. We claim that in this case
$x^{2q}-y^{2q} \in I^{[q]} \setminus \m^{[q]}I$, and therefore the equality (\ref{Equality on function}) does not hold for every $q$.
Clearly, $x^{2q}-y^{2q}=(x^2-y^2)^q \in I^{[q]}$, moreover $x^{2q}-y^{2q}=(x^{2}-y^{2})(x^{2q-2}+x^{2q-4}y^{2}+ \dots + y^{2q-2})$ and, since $I$ is principal, it is enough to show that the second factor is not in $\m^{[q]}$. On the other hand, all of its addends are in $\m^{[q]}$ except $x^{q-1}y^{q-1}$ and, so, it is not in $\m^{[q]}$.
\end{rem}

Corollary \ref{Equality holds} says that, under appropriate conditions, the Hilbert-Kunz function of $R(I)_{a,b}$ is the sum of the Hilbert-Kunz functions of $R$ and $I$, where the latter is seen as an $R$-module. The previous remark shows that in general this is not true. Also, if $B_q$ is invertible for every $q$ and $I=\m$, it is easy to see that $\ell_R((\m^{[q]}I+ B_q I^{[q]})/\m^{[q]}I)$ is equal to the number of the minimal generators of $\m^{[q]}$. In general the length of this module can lead to Hilbert-Kunz functions whose behaviour is very different from the one of $R$.

\section{Examples for the second coefficient of the Hilbert-Kunz function}

If $\ell_R(R/\m^{[q]})=e_{\HK}(R) q^d + \beta q^{d-1}+ O(q^{d-1})$  for some real number $\beta$ independent of $q$, we refer to $\beta$ as the second coefficient of the Hilbert-Kunz function of $R$. It is well known that this coefficient may not exist, even though its existence has been proved for large families of rings.

\begin{ex} \rm
Consider the ring $R=k[x,y]/(x^3+y^3)$ with $k$ a field of prime characteristic $p$ and let $\m=(x,y)$ and $I=(x)$. Assume also that $p \equiv 2 \mod 3$. Then, it is not difficult to see that
\[
\ell_R\left(\frac{R}{\m^{[q]}}\right)=\ell_R\left(\frac{I}{\m^{[q]}I}\right)=3q-2.
\]
Indeed, $I$ and $R$ are isomorphic as $R$-modules and $R/\m^{[q]}\cong k[x,y]/(x^q,y^q,x^3+y^3)$. If $q \equiv m \mod 3$, then a Gr\"obner basis of $(x^q,y^q,x^3+y^3)$ with respect to the lexicographic order $x>y$ is $\{y^q,x^3-y^3,x^m y^{q-m}\}$ and, so, its initial ideal is $J=(y^q,x^3,x^m y^{q-m})$. Therefore, the non-zero monomials of $k[x,y]/J$ are $x^i y^j$ for $i=0,1,2$ and $j=0, \dots, q-1$ except $x^m y^{q-m}$ and $x^2y^{q-1}$, which are $3q-2$ many monomials.

Despite this, if $B_q$ is invertible for every $q$, the second coefficient of the Hilbert-Kunz function of $\widehat{R}(\widehat{I})_{a,b}$ does not exist. Indeed, $(\m^{[q]}I+ B_q I^{[q]})/\m^{[q]}I=(x^q,xy^q)/(x^{q+1},xy^q)$ and
\[
\ell_R\left( \frac{(x^q,xy^q)}{(x^{q+1},xy^q)}\right)=
\begin{cases}
2 \ \ \ \text{ if } e \text{ is odd} \\
1 \ \ \ \text{ if } e \text{ is even}
\end{cases}
\]
because $x^qy$ is in $(x^{q+1},xy^q)$ if and only if $q \equiv 1 \mod 3$, i.e. $e$ is even, whereas $x^q y^2$ is always in $(x^{q+1},xy^q)$. Since the completion does not affect the length, if $B_q$ is invertible for every $q$, we get
\[
\ell_{\widehat{R}(\widehat{I})_{a,b}}\left(\frac{\widehat{R}(\widehat{I})_{a,b}}{ \n^{[q]}}\right)=
\begin{cases}
6q-6 \ \ \ \text{ if } e \text{ is odd} \\
6q-5 \ \ \ \text{ if } e \text{ is even},
\end{cases}
\]
where $\n$ is the maximal ideal of $\widehat{R}(\widehat{I})_{a,b}$.
For instance, this occurs when $a \in k$ and $b=0$, as in the case of the amalgamated duplication, or when $a=b=-1$ and $p \neq 5$, see Remark \ref{Particular values}.
\end{ex}

It is also possible to have the opposite behaviour: the second coefficient of the Hilbert-Kunz function of $R$ does not exist, but the one of $R(I)_{a,b}$ exists for every $a,b \in R$ and a suitable ideal $I$.

\begin{ex} \rm \label{last example}
Let $k=\mathbb{Z}/3\mathbb{Z}$ and $R=k[x,y]/(x^4+x^3y+x^2y^2+xy^3+y^4)$. It is easy to see that the reduced Gr\"obner basis ${\rm G}(J)$ of $J=(x^4+x^3y+x^2y^2+xy^3+y^4,x^q,y^q)$ with respect to the lexicographic order $x>y$ depends on the congruence of $q$ modulo $5$. More precisely, if we set $f=x^4+x^3y+x^2y^2+xy^3+y^4$, we have
\[
{\rm G}(J)=\begin{cases}
\{y^q, f, x y^{q-1}\}   &\text{ \ if } q \equiv 1 \mod 5 \\
\{y^q, f, x^2y^{q-2}\} &\text{ \ if } q \equiv 2 \mod 5 \\
\{y^q, f, x^3y^{q-3}, x^2 y^{q-1}\} &\text{ \ if } q \equiv 3 \mod 5 \\
\{y^q, f, x^3y^{q-3}+x^2 y^{q-2}+xy^{q-1}\}\ \ \ \ \ \ &\text{ \ if } q \equiv 4 \mod 5. \\
\end{cases}
\]
It is a straightforward calculation to find the initial ideal of $J$ and the length of $k[x,y]/J$, therefore, since the completion does not affect this length, we get
\[
\ell_{\widehat{R}}\left(\frac{\widehat{R}}{\widehat{\m}^{[q]}\widehat{R}} \right)=
\begin{cases}
4q-4 \ \ \ \ \ \ \text{ if } e \text{ is odd} \\
4q-3 \ \ \ \ \ \ \text{ if } e \text{ is even},
\end{cases}
\]
where $\m=(x,y)$.
On the other hand, if we consider the ideal $I=(x^3,y^3,f)$ of $R$, we can find the Gr\"obner basis of $\m^{[q]}I+(f)$ seen as ideal of $k[x,y]$ obtaining
\[
{\rm G}(\m^{[q]}I+(f))=\begin{cases}
\{y^{q+3}, f, x y^{q+2},x^2 y^{q+1}, x^3 y^{q}  \}   &\text{ \ if } q \equiv 1,4 \mod 5 \\
\{y^{q+3}, f, x^2y^{q+1}, x^3 y^{q}\} &\text{ \ if } q \equiv 2 \mod 5 \\
\{y^{q+3}, f, x y^{q+2}, x^3y^{q}\} &\text{ \ if } q \equiv 3 \mod 5. \\
\end{cases}
\]
This, together with the equalities $\ell_{R}(I/\m^{[q]}I)=\ell_{R}(R/\m^{[q]}I)-\ell_{R}(R/I)$ and $\ell_{R}(R/I)=8$, easily implies that
\[
\ell_{R}\left(\frac{I}{\m^{[q]}I} \right)=
\begin{cases}
4q-1 \ \ \ \ \ \ \text{ if } e \text{ is  odd} \\
4q-2 \ \ \ \ \ \ \text{ if } e \text{ is  even}.
\end{cases}
\]
Hence, since $I=\m^{[p]}$, Corollary \ref{Equality holds} implies that the Hilbert-Kunz function of $R(I)_{a,b}$ is equal to $8q-5$ for every $a$ and $b$, in particular the second coefficient exists.
\end{ex}

Since the polynomial $f$ of the previous example is homogeneous and irreducible in $k[x,y]$, it is also irreducible as an element of $k[[x,y]]$. Indeed, if $f=f_1 f_2$ in $k[[x,y]]$, then the product of the terms with smallest degree of $f_1$ and $f_2$ has to be equal to $f$. Therefore, in the previous example $\widehat{R}$ is a local domain.

The second coefficient of the Hilbert-Kunz function has been systematically studied for the first time in \cite{HMM}, where it is proved its existence for excellent local normal domains with a perfect residue field. Actually, the condition {\it normal domain} can be replaced by {\it ring regular in codimension one}, see \cite{CK,HY}. It is well known that it is not possible to drop this condition, see e.g. the previous examples or the one in  \cite{M}, and our last example shows that it is not possible to drop the regularity in codimension one, even if $R$ is assumed to be an integral domain. We also notice that, starting from such a domain, it is very easy to construct other domains with the same property using the rings $R(I)_{a,b}$, in contrast of the idealization and the amalgamated duplication which are never domains. For instance, if in Example \ref{last example} we consider $a,b \in \widehat{R}$ such that $t^2+at+b$ is irreducible over the total ring of fractions of $\widehat{R}$, we get that $\widehat{R}(I)_{a,b}$ is a domain for every ideal $I$ of $\widehat{R}$ by \cite[Remark 1.10]{BDS}. As explicit examples, the rings $\widehat{R}(\widehat \m)_{0,-1}$ and $\widehat{R}(\widehat \m)_{-1,-1}$ are always domains and in this case $B_q$ is always invertible by Remark \ref{Particular values}. Moreover, Theorem \ref{main} implies that for every $q=p^e$ with $e \in \mathbb{N}$ their Hilbert-Kunz function is equal to
\begin{equation*}
\ell_R\left(\frac{R}{\m^{[q]}}\right)+\ell_R\left(\frac{\m}{\m^{[q]}\m}\right) -\ell_R\left(\frac{\m^{[q]}}{\m^{[q]}\m}\right)=\ell_R\left(\frac{R}{\m^{[q]}}\right)+\ell_R\left(\frac{\m}{\m^{[q]}}\right)=2\ell_R\left(\frac{R}{\m^{[q]}}\right)-1.
\end{equation*}
Therefore, it is equal to
\begin{equation*}
\begin{cases}
8q-9 \ \ \ \ \ \ &\text{ \ if } e \text{ is odd} \\
8q-7 \ \ \ \ \ \ &\text{ \ if } e \text{ is even.}
\end{cases}
\end{equation*}
If $R_1=\widehat{R}(\widehat \m)_{0,-1}$ and $\m_1$ is the unique maximal ideal of $R_1$, also the second coefficient of the Hilbert-Kunz function of $R_2=R_1(\m_1)_{0,-1}$ is periodic and, continuing in this way, it is possible to construct infinitely many domains whose second coefficient is periodic.

\medskip

\noindent
{\bf Acknowledgements}.
Most of this paper was carried out while the first author stayed at the Institute of Mathematics of the University of Barcelona (IMUB). He would like to thank the IMUB for the great hospitality.
The authors also thank Alessio Caminata for some useful discussions about the topics of this paper and the anonymous referee for pointing out some inaccuracies in a preliminary version of the manuscript and the alternative proof of (3) in Theorem \ref{main}.

\end{document}